\theoremstyle{plain}
\newtheorem{master}{Master}[section]
\newtheorem{prop}[master]{Proposition}
\newtheorem{thm}[master]{Theorem}
\newtheorem{fact}[master]{Fact}
\newtheorem{lem}[master]{Lemma}
\newtheorem{question}[master]{Question}
\newtheorem{claim}[master]{Claim}
\theoremstyle{definition}
\newtheorem{defin}[master]{Definition}
\theoremstyle{remark}
\newtheorem{remark}[master]{Remark}
\numberwithin{equation}{section}
\newcommand{\Cond}[1]{(#1)_A^\omega}
\newcommand{\Condi}[2]{(#1)_A^{#2}}
\newcommand{\RP}[1]{\Cond{\omega}\times_{R#1} \Cond{\omega}}
\newcommand{\LP}[1]{\leq_{R#1}}
\begin{document}
\title[Canonization for the Carlson-Simpson forcing]{Canonization of analytic equivalences on the Carlson-Simpson forcing}
\author{Michal Doucha}
\address{Institute of Mathematics, Academy of Sciences, Prague, Czech republic}
\email{m.doucha@post.cz}
\thanks{The research of the author was partially supported by grant IAA100190902 of Grant Agency of the Academy of Sciences of the Czech Republic}
\keywords{analytic equivalence relations, Carlson-Simpson theorem, Canonical Ramsey theorem.}
\subjclass[2000]{03E15, 03E05,05D10}
\begin{abstract}
We prove a canonization result for the Carlson-Simpson forcing in the spirit of \cite{KSZ}. We generalize the weak form of the Carlson-Simpson theorem (\cite{CaSi}) dealing with partitions without free blocks: instead of dealing with finite Borel (resp. Baire-property) colorings we deal with (uncountable) colorings such that the corresponding equivalence relation (two partitions are equivalent if they are colored by the same color) is analytic.
\end{abstract}
\maketitle
\section*{Introduction}
In \cite{CaSi}, Timothy J. Carlson and Stephen G. Simpson prove a strong combinatorial theorem concerning finite partitions of natural numbers that is in some sense dual to the classical Ramsey theorem. It is usually called the Dual Ramsey theorem or the Carlson-Simpson theorem. In this paper we connect this combinatorial result with the research program from \cite{Zap} and \cite{KSZ}. We define a forcing notion, resp. a $\sigma$-ideal on a certain Polish space, that corresponds to the object studied in the Dual Ramsey theorem and prove a canonization result for this $\sigma$-ideal. More specifically, we identify a finite set of equivalence relations that are in the spectrum of this ideal and any other analytic equivalence relation canonizes to one of them. This result can be viewed as a generalization of the weaker form of the Carlson-Simpson theorem dealing with partitions of $\omega$ without free blocks (Lemma 2.3 in \cite{CaSi}).

Before defining necessary notions, we can state one immediate interesting consequence of our result.
\begin{thm}\label{introcor}
Let $E$ be any analytic equivalence relation on $\mathcal{P}(\omega)$ (we identify elements of $\mathcal{P}(\omega)$ with elements of $2^\omega$). Then there exists an infinite sequence $(A_n)_{n\in \omega}$ of pairwise disjoint non-empty subsets of $\omega$ (finite or infinite) such that either for any two different arbitrary unions of such sets (both containing $A_0$ though) they are $E$-equivalent, or for any two different arbitrary unions both containing $A_0$ they are $E$-inequivalent.
\end{thm}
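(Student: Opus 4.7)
The plan is to reduce the statement to the main canonization theorem of the paper (the one advertised in the abstract). Identify $\mathcal P(\omega)$ with $2^\omega$, and let $X$ denote the Polish space underlying the Carlson-Simpson forcing, whose points we may think of as infinite sequences $(A_n)_{n\in\omega}$ of pairwise disjoint non-empty subsets of $\omega$ with $A_0$ a distinguished base block. The evaluation map $\phi\colon X\to 2^\omega$ sending $(A_n)_n$ to $A_0$ is continuous, and pulling back $E$ along $\phi$ yields an analytic equivalence
\[
(A_n)_n\,\tilde E\,(B_n)_n \iff A_0\,E\,B_0
\]
on $X$.

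Apply the main canonization theorem to $\tilde E$. It produces a Carlson-Simpson condition, i.e.\ a sequence $(C_n)_{n\in\omega}\in X$, such that on the sub-conditions (Carlson-Simpson coarsenings) of $(C_n)_n$ the restriction of $\tilde E$ coincides with one of the canonical equivalences in the spectrum. Every coarsening $(D_n)_n$ of $(C_n)_n$ has first block $D_0=\bigcup_{n\in S}C_n$ for some $S\subseteq\omega$ with $0\in S$, and conversely every such union arises as $D_0$ for some coarsening (for instance place each remaining $C_n$ in its own block, so that no free blocks occur).

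The key observation is that $\tilde E$ depends only on the first block: whenever $D_0=D'_0$ one has $(D_n)_n\,\tilde E\,(D'_n)_n$ trivially. Hence every canonical equivalence in the spectrum that is strictly finer than ``equality of first blocks'' -- most importantly the identity on $X$ -- is incompatible with $\tilde E$. The only canonical forms consistent with this constraint are the total relation on sub-conditions and the relation ``$D_0=D'_0$''. In the first case every union $\bigcup_{n\in S}C_n$ with $0\in S$ is $E$-equivalent to every other; in the second case two distinct such unions are $E$-inequivalent. Setting $A_n=C_n$ yields, respectively, the two alternatives of the theorem.

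The main anticipated obstacle is the last comparison step: one must inspect the explicit finite list of canonical analytic equivalences furnished by the main canonization theorem and verify that precisely the two forms above are the ones compatible with $\tilde E$'s dependence on only the first block. Any equivalence in the spectrum that separates two coarsenings sharing the same first block is immediately excluded by the reflexivity of $E$ together with the defining reduction $\phi$, so this verification should reduce to a short case analysis once the spectrum is in hand; after that the translation back to the formulation on $\mathcal P(\omega)$ is immediate.
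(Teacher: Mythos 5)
There is a genuine gap: you apply the canonization theorem to the wrong space. The Polish space underlying the Carlson--Simpson ideal $I_{C_n}$ is $(\omega)_A^0\cong A^\omega$, the space of partitions of $\omega$ into $|A|$ labelled pieces with \emph{no} free blocks, and Theorem \ref{CScan} canonizes analytic equivalence relations on sets of the form $[X]=(X)_A^0\subseteq A^\omega$. The space you call $X$, whose points are infinite sequences $(A_n)_n$ of pairwise disjoint sets, is the space of \emph{conditions} $(\omega)_A^\omega$; the theorem says nothing about analytic equivalence relations on that space, so the step ``apply the main canonization theorem to $\tilde E$'' is not licensed. Symptoms of the mismatch: the canonical relations in the spectrum are $[Y]\times[Y]$, $\mathrm{id}([Y])$ and $E_{\mathcal B}\upharpoonright[Y]$ for partitions $\mathcal B$ of the alphabet $A$, and they live on $[Y]\subseteq A^\omega$, not on a set of sub-conditions; your parenthetical ``so that no free blocks occur'' misreads ``free block'', which in the paper means a block containing no letter of $A$; and a coarsening realizing a union $\bigcup_{n\in S}C_n$ with $S$ cofinite has only finitely many blocks and is not even a point of $(\omega)_A^\omega$.

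The intended argument needs no pullback at all. Take $A=\{a_0,a_1\}$; then $(\omega)_A^0\cong A^\omega\cong 2^\omega\cong\mathcal P(\omega)$ directly (a point is determined by its $a_1$-part), so $E$ is already an analytic equivalence relation on $[X]$ for $X$ the top condition. Theorem \ref{CScan} with $|A|=2$ gives total canonization: there is $Y\in(\omega)_A^\omega$ with $E\upharpoonright[Y]$ equal to $[Y]\times[Y]$ or to $\mathrm{id}([Y])$, since the only partitions of a two-letter alphabet yield exactly these two relations --- so the exclusion step you anticipate as the ``main obstacle'' evaporates. Setting $A_0=Y(a_1)$ and $A_i=Y(i-1)$ for $i\geq 1$, the elements of $[Y]$ are precisely the unions $\bigcup_{i\in S}A_i$ with $0\in S$, and the two canonical outcomes are the two alternatives of the statement. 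Your final translation step is essentially this, so the repair is local: delete the pullback and work on $A^\omega$ from the start. (One cosmetic point, present in the paper's own proof as well: $Y(a_1)$ could be empty, so one should first strengthen $Y$ so that its $a_1$-block meets $\omega$, ensuring $A_0\neq\emptyset$.)
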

\section{Preliminaries and basic notions}
In order to state and motivate our results we present the general program of \cite{KSZ}: let $X$ be a Polish space, $I$ a $\sigma$-ideal on $X$ and $E\subseteq X^2$ an analytic equivalence relation.
\begin{itemize}
\item We say that $E$ is in the spectrum of $I$ if there exists a Borel set $B\in I^+$ such that $\forall C\in (I^+\cap \mathrm{Borel}(B))$ $E\upharpoonright C$ has the same complexity as $E$ on the whole space, i.e. $E\upharpoonright C$ is Borel bireducible with $E\upharpoonright X$.
\item On the other hand, $I$ canonizes $E$ to a relation $F\leq _\mathrm{B} E$ if for every Borel $B\in I^+$ there is some Borel $C\in (I^+\cap \mathrm{Borel}(B))$ such that $E\upharpoonright C$ is bireducible with $F$.
\end{itemize}
  
Now we introduce the original notation of Carlson and Simpson from \cite{CaSi} and state their theorem. Then we define the forcing notion, resp. the $\sigma$-ideal and state our result.
\begin{defin}
Let $A$ be a finite (at least two-element) alphabet. As in \cite{CaSi}, by $\Condi{\omega}{\alpha}$, where $\alpha\in (\omega\setminus |A|)\cup\{\omega\}$, we denote the set of all partitions of $A\cup \omega$ into $\alpha$ pieces such that two different elements $a\neq b\in A$ lie in two different pieces of such partitions. For any $X\in \Condi{\omega}{\alpha}$, a piece containing some $a\in A$ is called an $a$-block, a piece not containing any element of $A$ is called a free block.

For $Y\in \Condi{\omega}{\beta}$ and $X\in \Condi{\omega}{\alpha}$, where $\beta\leq \alpha$, we say $Y$ is coarser than $X$, $Y\preceq X$, if every block of $X$ is contained in some block of $Y$. For any $X\in \Condi{\omega}{\alpha}$ by $\Condi{X}{\beta}$, where $\beta\leq \alpha$, we denote the set $\{Y\in \Condi{\omega}{\beta}:Y\preceq X\}$.
\end{defin}
\begin{defin}[Space $\Condi{\omega}{0}$]
Let $A$ be as before. Consider the set $\Condi{\omega}{0}$. We look at it as a set of all partitions of $\omega$ into $|A|$ pieces indexed by $A$. There is a natural correspondence between $\Condi{\omega}{0}$ and $A^\omega$. The latter carries a product topology if we consider $A$ as a discrete space which is homeomorphic to the topology of the Cantor space. From now on we will not distinguish between these two sets and thus be able to speak about topological properties of $\Condi{\omega}{0}$.
\end{defin}
\begin{defin}[Carlson-Simpson forcing/ideal]
We shall consider $(\Cond{\omega},\preceq)$ as a forcing notion. For $X\in \Cond{\omega}$ we shall write $[X]$ to denote the set $(X)_A^0$. Note that for any such $X$, $[X]$ is a closed subset of $\Condi{\omega}{0}$ (or $A^\omega$).

Let $I_{C_n}\subseteq \mathcal{P}(A^\omega)$, where $n$ denotes the cardinality of $A$, be the set of all Borel subsets of $A^\omega$ that do not contain $[X]$ for some $X\in \Cond{\omega}$.
\end{defin}
The following proposition gives some properties of $I_{C_n}$.
\begin{prop}\emph{}\label{CSprop}
\begin{enumerate}
\item $I_{C_n}$ is a $\sigma$-ideal.
\item  $P_{I_{C_n}}$ is forcing equivalent to $(\Cond{\omega},\preceq)$.
\item  $P_{I_{C_n}}$ is proper.
\end{enumerate}
\end{prop}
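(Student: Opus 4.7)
The three parts all rest on the standard picture of $(\Cond{\omega},\preceq)$ as a fusion-style forcing together with the Carlson-Simpson theorem itself. I would take them in the order (2), (1), (3), since the real work---a fusion argument---appears only in (1) and (3).

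For (2), the natural map $X\mapsto [X]_{I_{C_n}}$ into $P_{I_{C_n}}=\mathrm{Borel}(A^\omega)/I_{C_n}$ is a dense embedding. Density is immediate from the definition of $I_{C_n}$: every $I_{C_n}$-positive Borel set contains some $[X]$. Monotonicity $X\preceq Y\Rightarrow [X]\subseteq [Y]$ is obvious, and compatibility in $\Cond{\omega}$ clearly yields compatibility in $P_{I_{C_n}}$; the converse is a routine block-manipulation, since any $Z\in\Cond{\omega}$ with $[Z]\subseteq [X]\cap [Y]$ can be refined to a common refinement of $X$ and $Y$.

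For (1), closure under subsets is built into the definition. For countable unions I would argue by contradiction: assuming $[X]\subseteq \bigcup_n B_n$ with each $B_n$ Borel and $B_n\in I_{C_n}$, build a fusion sequence $X\succeq X_0\succeq X_1\succeq\cdots$ in $\Cond{\omega}$ with $[X_n]\cap B_n=\emptyset$. At step $n$, the Carlson-Simpson theorem applied to the two-colored Borel partition $\bigl\{B_n\cap [X_{n-1}],\ [X_{n-1}]\setminus B_n\bigr\}$ produces a monochromatic refinement; it cannot fall in $B_n$, since $B_n\in I_{C_n}$. Arranging that each refinement agrees with $X_{n-1}$ on a growing frozen initial segment of blocks, the fusion limit $Y\in\Cond{\omega}$ refines every $X_n$ and satisfies both $[Y]\subseteq\bigcup_n B_n$ and $[Y]\cap B_n=\emptyset$ for all $n$, a contradiction.

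For (3), given a countable elementary submodel $M$ of a sufficiently large $H_\theta$ containing $\Cond{\omega}$ and a condition $X\in M\cap\Cond{\omega}$, I would enumerate in $M$ the maximal antichains $(\mathcal{A}_n)_{n<\omega}$ of $\Cond{\omega}$ that lie in $M$, and run the same fusion: at step $n$, refine $X_{n-1}$ to some $X_n\preceq X_{n-1}$ agreeing with $X_{n-1}$ on its $n$-th frozen initial segment, in such a way that every further refinement of $X_n$ extending that initial segment is compatible with some element of $\mathcal{A}_n\cap M$. The fusion limit is then a master condition for $M$ below $X$. The principal obstacle, shared by (1) and (3), is setting up the fusion apparatus itself: one must specify precisely what a ``frozen initial segment at level $n$'' means for a Carlson-Simpson condition (which blocks, and how many integers within each, are committed) and verify that the inverse limit of a decreasing fusion sequence is again a member of $\Cond{\omega}$. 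Once this framework is in place, each individual refinement step reduces either to an application of the Carlson-Simpson theorem (for (1)) or to a routine antichain-capture argument (for (3)).
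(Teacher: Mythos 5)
Your proposal is correct in outline, but for item (1) it takes a genuinely different route from the paper. The paper proves (1) as a corollary of the main canonization theorem \ref{CScan}: from $[X]\subseteq\bigcup_n A_n$ with each $A_n\in I_{C_n}$ it builds the Borel equivalence ``$x,y$ lie in a common $A_n$'', which has countably many classes, so canonization on some $[Y]$ must yield the full relation (the identity and the nontrivial $E_\mathcal{B}$'s have uncountably many classes on any positive set), forcing $[Y]\subseteq A_n$ for some $n$ --- a contradiction. You instead give the direct combinatorial argument via Theorem \ref{CSThm} plus fusion, which the paper explicitly notes is possible but does not carry out. What each buys: the paper's route is two lines once Theorem \ref{CScan} is available (and explains why the proposition is deferred to the end), while yours is self-contained and needs only the classical Carlson--Simpson lemma. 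For (2) both arguments are the same (density of the sets $[X]$ among positive Borel sets, which for the converse direction is (1) plus the definition of $I_{C_n}$; note that any $Z$ with $[Z]\subseteq[X]\cap[Y]$ already satisfies $Z\preceq X$ and $Z\preceq Y$, so no further refinement is needed). For (3) the paper simply invokes Axiom A for the orders $\preceq_m$; your master-condition argument is the standard unwinding of that and is fine, provided you keep each $X_n$ inside $M$ so that the antichain elements captured at stage $n$ lie in $\mathcal{A}_n\cap M$. Finally, the ``principal obstacle'' you flag --- defining frozen initial segments and taking fusion limits --- is not actually open: the paper's $\preceq_m$, the operation $X\mapsto X^s$ for $s\in n^{<\omega}$, and the fusion-limit construction supply exactly this apparatus, and the step you defer amounts to running Theorem \ref{CSThm} once below $X_{n-1}^s$ for each of the finitely many assignments $s$ of the frozen free blocks and pulling the result back along the correspondence $Y\mapsto Z$ with $Z^s=Y$.
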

We postpone the proof until we have proved the main theorem \ref{CScan}. The reason for that is that the first item of Proposition \ref{CSprop} will follow easily. We do not need any part of the proposition in the proof of the main theorem. However, let us mention that all items of Proposition \ref{CSprop} could be proved by a direct argument without applying the main theorem.

Let us state a restricted version of the Carlson-Simpson (Dual Ramsey) theorem for partitions without free blocks.
\begin{thm}[Carlson-Simpson \cite{CaSi}, Lemma 2.3]\label{CSThm}
For any $X\in \Cond{\omega}$ and any finite partition $[X]=C_0\cup\ldots \cup C_n$ into pieces having the Baire property there exists $Y\in \Cond{X}$ and $i\leq n$ such that $[Y]\subseteq C_i$.
\end{thm}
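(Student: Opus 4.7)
The plan is to reduce from the Baire-measurable case to the clopen case, and then to invoke an infinite-dimensional partition theorem for variable words (the Hales--Jewett theorem and its Carlson strengthening) via a fusion argument. Throughout, I view $[X]$ as (essentially) $A^{\text{free blocks of }X}$, where the map sends a partition in $[X]$ to the function assigning each free block of $X$ the letter $a\in A$ of the $a$-block it joins.

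The first step is a standard topological reduction. Using the Baire property, write each $C_i = U_i\triangle M_i$ with $U_i$ open in $[X]$ and $M_i$ meager, and arrange the $U_i$ to be pairwise disjoint by peeling them off in order (absorbing boundaries into the meager part). Decompose $M=\bigcup_i M_i=\bigcup_n N_n$ with each $N_n$ nowhere dense. It now suffices to find $Y\in \Cond{X}$ such that $[Y]$ is disjoint from $M$ and contained in some single $U_i$; once such a $Y$ is found, $[Y]\subseteq U_i\setminus M\subseteq C_i$.

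The second step, which is the heart of the argument, is a fusion. I construct $Y$ by specifying its blocks in stages, producing at stage $n$ a finite initial segment $Y_n$ of $Y$ (finitely many blocks, covering an initial segment of $\omega$) and a coarser ``promise'' part $X_n\preceq X$ on the unused coordinates. At each stage I must refine so as to (a) force any partition in $[Y]$, once its behavior on $Y_n$ is fixed, to land in a single open set $U_{i(s)}$ (where $s$ ranges over the finitely many colorings of blocks in $Y_n$ by letters of $A$), and (b) avoid the nowhere dense set $N_n$ when the partition is extended in any way compatible with $X_n$. Each of these finitely many demands is handled by an application of the finite Graham--Rothschild partition theorem (equivalently, iterated Hales--Jewett for variable words of growing dimension) to the relevant finite quotient of $\Condi{X_{n-1}}{\omega}$: the theorem produces a further coarsening inside which one color class prevails. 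Finitely many applications, one per demand, yield the next $Y_n$ and $X_n$.

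The limit $Y$ then satisfies $[Y]\subseteq U_{i^*}\setminus M$ for the stabilized index $i^*$, as required. The main obstacle is the combinatorial core at each fusion stage: one must apply a strong enough finite partition result (Hales--Jewett / Graham--Rothschild for partitions without free blocks) to simultaneously decide the color and dodge the nowhere dense set, and one must be careful that the growing approximations $Y_n$ produce a legitimate element of $\Cond{X}$, meaning that every natural number is eventually placed into a block, infinitely many non-free blocks are produced, and the $a$-blocks of $X$ remain in distinct blocks of $Y$. Book-keeping so that these fusion requirements are met in parallel with the Ramsey/meager-avoidance step is the technical crux of the proof.
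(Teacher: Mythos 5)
First, a point of reference: the paper does not prove this statement at all --- it is imported verbatim from Carlson--Simpson as Lemma 2.3 of \cite{CaSi} and used as a black box. So the comparison is between your sketch and the literature, not the paper.

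Your overall architecture (Baire-property decomposition $C_i=U_i\triangle M_i$, then a fusion producing $Y$ with $[Y]$ inside one $U_i$ and disjoint from the meager part) is the right one, but there are two problems with the sketch as written. The substantive gap is the claim that the fusion yields a single ``stabilized index $i^*$.'' At stage $n$ you decide an index $i(s)$ for each of the finitely many cylinders $s$ over $Y_n$, and the number of cylinders grows with $n$; decisions are inherited by sub-cylinders, so each branch of $[Y]$ eventually acquires a definite color, but different branches may acquire different colors, and no single $i^*$ need emerge. What you actually get is a partition of $[Y]$ into relatively clopen pieces, and you must then add a final (easy) step, or better, do the color decision once at the outset: since $[X]$ is a compact (hence Baire) space, some $C_i$ is non-meager, hence comeager in a basic clopen subset of $[X]$; and the key structural fact --- which your identification of $[X]$ with $A^{\text{free blocks of }X}$ makes available but which you never exploit --- is that every nonempty basic clopen subset of $[X]$ is itself of the form $[X^s]$ for a condition $X^s\in\Cond{X}$. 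Restricting there reduces the whole theorem to: every comeager subset of $[X']$ contains $[Y]$ for some $Y\in\Cond{X'}$. The second problem is that your appeal to Hales--Jewett/Graham--Rothschild at each fusion stage is a red herring for this ``no free blocks'' statement (it is genuinely needed for the dual Ramsey theorem about $\Condi{\omega}{k}$ with $k\geq 1$, which is presumably what you are remembering). Here no Ramsey-theoretic input is required: to dodge the closed nowhere dense set $N_n$ you handle the finitely many cylinders $s$ over the current free blocks \emph{in series}, each time absorbing finitely many further free blocks of the reservoir into the $a$-blocks so that the extended cylinder misses $N_n$ (possible because the complement of $N_n$ is open dense, and such extensions live entirely in the constant part, hence do not interact with the assignments $s$); new free blocks of $Y$ are introduced one per stage from untouched reservoir blocks and their assignments are handled at later stages. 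With those two repairs your plan goes through, and the bookkeeping you flag at the end (every integer eventually placed, infinitely many free blocks) is indeed routine.
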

We conclude this section by the following fact that will serve as a useful tool in the proof of the main theorem by forcing.
\begin{fact}[Analytic absoluteness; see \cite{Je} Theorem 25.4]\label{analabs}
Let $a\in \omega^\omega$ be a parameter and $\phi$ be a $\Sigma_1^1$ formula with free variables. Then $\phi(a)$ is absolute for all transitive models of a large enough fragment of set theory containing the parameter $a$.
\end{fact}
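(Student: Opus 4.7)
The plan is to reduce the absoluteness of a $\Sigma_1^1$ formula to the absoluteness of well-foundedness of a suitable tree on $\omega$, which is an easy and classical piece of folklore once the right tree is in hand.

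First, I would recall the tree representation of $\Sigma_1^1$ sets. Any $\Sigma_1^1$ formula $\phi(x)$ can be written as $\exists f\in\omega^\omega\ \forall n\ \psi(x\upharpoonright n,f\upharpoonright n)$ for some recursive predicate $\psi$. Given the parameter $a\in\omega^\omega$, define the tree
\[
T_a=\bigl\{(s,t)\in\omega^{<\omega}\times\omega^{<\omega}:|s|=|t|,\ s\subseteq a,\ \psi(s,t)\bigr\}.
\]
Then $\phi(a)$ holds iff $T_a$ has an infinite branch, i.e.\ is ill-founded. The construction of $T_a$ from $a$ is uniform and recursive, so as long as our transitive model $M$ contains $a$ and can carry out the basic finitary recursions (a tiny fragment of ZFC suffices), the tree $T_a^M$ computed inside $M$ coincides with $T_a$ computed in $V$.

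Next I would invoke absoluteness of well-foundedness between transitive models of enough ZFC. The downward direction is trivial: any infinite descending branch present in $M$ is also present in $V$. The upward direction uses that a well-founded tree $T$ admits a rank function $\rho\colon T\to\mathrm{Ord}$ defined by $\rho(t)=\sup\{\rho(t')+1:t'\text{ immediate successor of }t\}$; if $M$ contains $T$ and can perform the transfinite recursion needed to define $\rho$, then $M$ also witnesses that $T$ is well-founded. Thus $T_a$ is ill-founded in $V$ iff it is ill-founded in $M$, which via the equivalence above is exactly the absoluteness of $\phi(a)$.

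The only subtlety, and the one I would expect to take a little care, is making the phrase ``a large enough fragment of set theory'' precise: one needs enough of ZFC to perform the uniform tree construction, to do transfinite recursion over well-founded sets, and to form the rank function. In our later applications the models $M$ will be countable elementary submodels of some $H_\kappa$ for $\kappa$ sufficiently large, so they automatically satisfy every finite fragment of ZFC one could possibly need, and Fact~\ref{analabs} may be invoked without further comment. For a fully spelled-out treatment I would simply refer the reader to Theorem~25.4 in Jech.
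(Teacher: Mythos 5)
The paper gives no proof of this Fact at all: it is quoted as a black box from Jech (Mostowski's absoluteness theorem), and the argument you sketch --- $\Sigma^1_1$ normal form, the tree $T_a$, and absoluteness of well-foundedness via rank functions --- is precisely the classical proof found there, so your route coincides with the one the paper implicitly relies on. One sentence of your write-up needs repair, however. In the nontrivial direction you assert that if $M$ can carry out the recursion defining $\rho$ then ``$M$ also witnesses that $T$ is well-founded''; read literally this is either circular or merely re-proves the trivial direction (branches persist upward, hence well-foundedness passes downward). What is actually needed is the upward transfer of well-foundedness: if $M$ believes $T_a$ is well-founded, then $M$ constructs by transfinite recursion a rank function $\rho\colon T_a\to\mathrm{Ord}$, and since being an order-reversing map into the ordinals is absolute, $\rho$ is a rank function in $V$ as well, whence $T_a$ is well-founded in $V$. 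Two further small points: you should arrange the normal form so that $\{(s,t):\psi(s,t)\}$ is closed under initial segments (replace $\psi(s,t)$ by $\forall k\leq|s|\,\psi(s\upharpoonright k,t\upharpoonright k)$), otherwise $T_a$ need not be a tree; and in the paper's actual applications the relevant transitive models are the (transitive collapses of the) countable elementary submodels and their generic extensions $M[z,y]$, not the elementary submodels themselves, which are not transitive. With those adjustments the proof is correct and standard.
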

In particular, if $A\subseteq X$ is an analytic subset of a standard Borel space $X$, $x\in X$ is arbitrary and $M$ and $N$ are two transitive models of set theory containing $A, X$ (resp. codes for them) and $x$, then $M\models x\in A$ iff $N\models x\in A$.
\section{Canonization}
Let $A$ be a finite alphabet such that $|A|=n\geq 2$. Let $\mathcal{B}$ be a partition of $A$. Then we can consider the following equivalence relation $E_\mathcal{B}$ on $A^\omega$: for $x,y\in A^\omega$ we set $x E_\mathcal{B} y$ iff $\forall n\in \omega\forall B\in \mathcal{B} (x(n)\in B\Leftrightarrow y(n)\in B)$.

It is easy to check that $E_B$ is a closed equivalence relation that is in the spectrum of $I_{C_n}$. For a finite alphabet $A$ let $\mathcal{P}_A$ denote the set of partitions of $A$. The main result says that these are the only analytic equivalences in the spectrum of $I_{C_n}$. Every other analytic equivalence relation canonizes to one of them.
\begin{thm}\label{CScan}
Fix some finite alphabet $A$ with at least two elements. Let $X\in \Cond{\omega}$ be a condition in the Carlson-Simpson forcing and $E$ an analytic equivalence relation on $[X]$ (i.e. an analytic subset of $[X]^2$). Then there exists a subcondition $Y\in  \Cond{X}$ such that $E\upharpoonright [Y]$ is equal to $[Y]\times [Y]$ or to $\mathrm{id}([Y])$ or there exists $\mathcal{B}\in \mathcal{P}_A$ such that $E\upharpoonright [Y]=E_\mathcal{B}\upharpoonright [Y]$.

In particular, we have a total canonization for $I_{C_2}$.
\end{thm}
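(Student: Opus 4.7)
My plan combines a canonical partition theorem for pair-colorings on the Carlson-Simpson forcing with the algebraic constraints coming from $E$ being an equivalence relation. Note first that the three alternatives unify: $E\upharpoonright[Y]=[Y]\times[Y]$ is $E_\mathcal{B}\upharpoonright[Y]$ for $\mathcal{B}=\{A\}$, and $E\upharpoonright[Y]=\mathrm{id}([Y])$ is $E_\mathcal{B}\upharpoonright[Y]$ for $\mathcal{B}$ the partition of $A$ into singletons, so it suffices to produce some $\mathcal{B}\in\mathcal{P}_A$ and $Y\in\Cond{X}$ with $E\upharpoonright[Y]=E_\mathcal{B}\upharpoonright[Y]$.

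I would first set up the two-variable framework. Via the canonical identification $[X]\cong A^\omega$ (sending $x$ to the labeling of the free blocks of $X$ by their assigned letter in $A$), the product $[X]^2$ becomes $(A\times A)^\omega$, and each $[Y]^2$ for $Y\in\Cond{X}$ sits naturally as a sub-structure: labelings are constant on blocks of $Y$, with the diagonal values $(a,a)$ forced on the $a$-blocks. Since $E$ is analytic, it has the Baire property, so the pair-coloring $c(x_1,x_2):=\chi_E(x_1,x_2)$ is Baire-measurable on $[X]^2$. Fact~\ref{analabs} serves as a bridge from the analytic relation to the Baire setting where Theorem~\ref{CSThm} applies, via forcing-absoluteness arguments applied to mutually Carlson-Simpson generic pairs.

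The core step is a two-dimensional canonical Ramsey theorem for $c$: there exists $Y\in\Cond{X}$ and a symmetric subset $S\subseteq A\times A$ containing the diagonal such that, for all $(x_1,x_2)\in[Y]^2$, $c(x_1,x_2)=1$ iff the joint support $\{(x_1(n),x_2(n)):n\in\omega\}$ is contained in $S$. I would establish this either by a fusion argument iterating Theorem~\ref{CSThm} --- constructing a $\preceq$-decreasing sequence $X=Y_0\succeq Y_1\succeq\ldots$ of conditions each deciding $E$ on pairs of increasing combinatorial complexity and taking a diagonal limit --- or by invoking a dual-Ramsey/Graham--Rothschild theorem over the alphabet $A\times A$ with only the diagonal as designated letters. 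Since the $a$-blocks of $Y$ always contribute $(a,a)$ to the support for any pair in $[Y]^2$, the full diagonal of $A$ is automatically present, so the condition ``support $\subseteq S$'' reduces to ``every off-diagonal label pair taken by $(x_1,x_2)$ lies in $S$.'' Once this canonical form is in hand, the axioms of $E$ force $S$ to be an equivalence relation on $A$: reflexivity gives diagonal inclusion, symmetry of $E$ gives symmetry of $S$, and transitivity of $E$, applied to a three-term chain $x_1,x_2,x_3\in[Y]$ that agree except on a single free block where they take labels $a,b,c$ respectively, yields $(a,b),(b,c)\in S\Rightarrow(a,c)\in S$. Setting $\mathcal{B}:=A/S\in\mathcal{P}_A$, the canonical form immediately gives $E\upharpoonright[Y]=E_\mathcal{B}\upharpoonright[Y]$.

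The principal obstacle I expect is the two-dimensional canonical partition theorem. Theorem~\ref{CSThm} is one-variable and monochromatic, whereas the needed statement is two-variable and \emph{canonical} (with allowed outcomes being the $E_\mathcal{B}$). The fusion must carefully maintain enough freedom in both coordinates of the generic pair without collapsing them onto the diagonal, and the ``designated letters only on the diagonal'' structure is nonstandard from the point of view of Carlson-Simpson over $A\times A$. A secondary technicality is handling pairs $(x_1,x_2)\in[Y]^2$ that differ on infinitely many blocks of $Y$, where finite chains of single-block swaps are not directly available; this can be reduced to the finite-difference case by exploiting the canonicity of $c$ on supports to pick a finite-difference representative with the same support.
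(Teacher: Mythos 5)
There is a genuine gap, and it sits exactly where you flag ``the principal obstacle'': your core step is not only unproved, it is false in the generality in which you state it. You assert a canonical partition theorem for an arbitrary (symmetric, reflexive) Baire-measurable pair-coloring $c$ on $[X]^2$: that some $Y$ and some symmetric $S\subseteq A\times A$ exist with $c(x_1,x_2)=1$ iff the joint support $\{(x_1(n),x_2(n)):n\in\omega\}$ is contained in $S$. Take $A=\{0,1\}$ and let $c(x,y)=1$ iff $x=y$ or the label pairs at the first two positions of disagreement coincide. This is clopen, symmetric and reflexive, but on no $[Y]$ is its value determined by the support (pairs with full support can receive either color). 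So the canonical form cannot be extracted from the coloring alone and then have the equivalence axioms applied afterwards; transitivity must be used \emph{during} the canonization, not after it. This is precisely how the paper proceeds: it introduces, for each loop-containing directed graph $G$ on $A$ (your ``supports''), a reduced product forcing $\RP{G}$ in which both coordinates share the same free blocks, proves a genericity lemma (every $G$-related pair in a suitable cube is generic over a fixed countable model), transfers the forcing decision about $x_L E x_R$ to actual pairs by analytic absoluteness, and then uses transitivity twice in an essential way --- once via an alternating-path argument inside connected components to upgrade ``generic $H$-related pairs are equivalent'' to ``all $E_{\mathcal B}$-related pairs are equivalent,'' and once via a maximality argument on component structures to rule out any further identifications. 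Your single-block-swap transitivity step is a faint shadow of this, but it only makes sense once the (false, as stated) canonical form is already in hand.

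Two further concrete problems with your proposed routes to the core step. First, the Graham--Rothschild/dual-Ramsey idea over the alphabet $A\times A$ does not respect the product structure: a subcondition of the Carlson--Simpson forcing over $A\times A$ merges blocks according to $A\times A$-labels and so need not be of the form $[Y]\times[Y]$ for any single $Y\in\Cond{X}$; the whole difficulty is to refine both coordinates simultaneously while keeping the shared block structure, which is what the paper's reduced products and the fusion in its Lemma \ref{mainlem} are engineered to do. Second, your appeal to the Baire property is not the mechanism that works here: the paper's own remark points out that Theorem \ref{CSThm} is one-dimensional, and the two-dimensional statement is obtained not from Baire measurability but from analyticity via forcing and absoluteness (Fact \ref{analabs}); ``Baire-measurable'' buys you nothing in two variables without the genericity apparatus. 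Your reduction of the three alternatives to the single statement $E\upharpoonright[Y]=E_{\mathcal B}\upharpoonright[Y]$, and the observation that $S$ must be an equivalence relation once the canonical form holds, are both correct, but the theorem's entire content lies in the step you have left open.
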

Of course, $\mathrm{id}([Y])=E_\mathcal{B}\upharpoonright [Y]$ for $\mathcal{B}$ a partition into singletons, and $[Y]\times [Y]=E_\mathcal{B}\upharpoonright [Y]$ for $\mathcal{B}=\{A\}$.
\begin{remark}
This is an ``almost generalization" of Theorem \ref{CSThm} as this theorem can be viewed as a canonization result for equivalence relations having finitely many classes. We used the term ``almost generalization" as the Theorem \ref{CSThm} holds for partitions into pieces having the Baire property whereas Theorem \ref{CScan} generalizes only the case with analytic partitions.
\end{remark}
\begin{defin}
Let $A$ be from the statement of the theorem. For any $X\in \Condi{\omega}{\alpha}$ and any $a\in A$, by $X(a)$ we denote the elements of $\omega$ belonging to the $a$-block; i.e. $\{a\}\cup X(a)$ is the $a$-block of $X$. Similarly, for any $n<\alpha$ by $X(n)$ we denote the elements of $\omega$ belonging to the $n$-th free block where the enumeration of free blocks is determined by their minimal elements. 
\end{defin}
Thus for instance, if $X$ has two free blocks, $\{0,1,3,4,\ldots\}$ and $\{2\}$, the the $0$-th block is $\{0,1,3,4,\ldots\}$ and the $1$-st block is  $\{2\}$.
\begin{defin}
Let us write $A=\{a_0,\ldots,a_{n-1}\}$ and let $X\in \Cond{\omega}$ be a condition and $s\in n^{<\omega}$ a finite $n$-ary sequence. By $X^s$ we denote the condition $Y\in \Cond{X}$ for which for every $i<n$ $Y(a_i)=X(a_i)\cup \bigcup \{X(k): k<|s| \wedge s(k)=i\}$ and for every $m\in \omega$ $Y(m)=X(m+|s|)$.
\end{defin}
Note that whenever for some $X$ and $s$ there is $Y\in \Cond{X^s}$, then there is in fact a condition $Z$ such that $Z\in \Cond{X}$ and $Z^s=Y$; i.e. $Z(a_i)=Y(a_i)\setminus \bigcup_{i<|s|} X(i)$ for $i<n$, $Z(i)=X(i)$ for $i<|s|$ and $Z(i)=Y(i-|s|)$ for $i\geq |s|$. From that reason for a condition $X$ and a finite $n$-ary sequence $s$ when we write $Z^s\leq X^s$, then by $Z$ we mean the condition ($\in \Cond{X}$) described above.

We would like to use fusion of conditions, so in the next definition we define what fusion sequence is.
\begin{defin}[Fusion sequence]
We define the order $\preceq_m\subseteq \preceq$ for every $m$. For $X,Y\in \Cond{\omega}$ and $m\in \omega$ $Y\preceq_m X$ if $Y\in \Cond{X}$ and $\forall j<m (Y(j)\supseteq X(j))$. In particular, $Y\preceq_0 X$ iff $Y\in \Cond{X}$.

A sequence $(X_m)_{m\in \omega}\subseteq \Cond{\omega}$ is a fusion sequence if $\forall m>0 (X_m \preceq_m X_{m-1})$. Then we define the fusion of such a sequence to be the condition $X$ where $X(a_i)=\bigcup _m X_m(a_i)$ for every $i<n$, and for every $m\in \omega$ $X(m)=\bigcup _{j\geq m} X_j(m)$.

It is easy to check that $X\preceq_{m+1} X_m$ for every $m$.
\end{defin}

Let $X,Y\in (\omega)^\alpha_A$, where $\alpha\leq \omega$. Let us show how such a pair determines an oriented graph $(V,E)$. We set $V=A$ and $(a_i,a_j)\in E$ if $X(a_i)\cap Y(a_j)\neq \emptyset$. This motivates the following definition.
\begin{defin}
Let $G=(A,E)$ be an oriented graph with $A$ as a set of vertices. We shall always assume that $G$ contains all loops, i.e. for every $a\in A$, $(a,a)\in E$. Let $X,Y\in (\omega)^\alpha_A$, where $\alpha\leq \omega$. We say that $X$ and $Y$ are $G$-related if for every $a,b\in A$ $X(a)\cap Y(b)\neq \emptyset$ iff $(a,b)\in E$.
\end{defin}
We use similar concept to define reduced products of two copies of $(\omega)^\alpha_A$. By $\mathcal{G}$ we shall denote the set of all oriented graphs containing all loops with $A$ as the set of vertices.
\begin{defin}[Reduced products]\label{red_prod}
Let $G=(A,E)\in \mathcal{G}$. We define a reduced product $\RP{G}$ of $\Cond{\omega}\times\Cond{\omega}$ as follows: for any $(X,Y)\in \Cond{\omega}\times\Cond{\omega}$, $(X,Y)\in \RP{G}$ if whenever for some $a,b\in A$ $X(a)\cap Y(b)$, then $(a,b)\in E$. Moreover, the free blocks of $X$ and $Y$ are equal. Notice the difference from the requirement that $X$ and $Y$ are $G$-related; in particular, for any such a graph $G$ and any condition $X\in (\omega)_A^\omega$ we have $(X,X)\in \RP{G}$.

The order relation $\LP{G}$ on $\RP{G}$ is inherited from $\Cond{\omega}\times\Cond{\omega}$.

\end{defin}
\begin{proof}[Proof of Theorem \ref{CScan}]
Let $n=|A|$ and again assume that $A$ is enumerated as $\{a_0,\ldots,a_{n-1}\}$. 

The following lemma will be the main tool.
\begin{lem}\label{mainlem}
\emph{}\\
\begin{enumerate}[(i)]
\item Let $X\in \Cond{\omega}$ be any condition, $\mathcal{H}\subseteq \mathcal{G}$ any subset of the set of graphs $\mathcal{G}$ and let $M$ be a countable elementary submodel of some $H_\lambda$, where $H_\lambda$ is sufficiently large, which contains $X$ and $E$. Then there exists $Y\preceq_0 X$ such that $\forall z,y\in [Y]$ if there is some $G\in \mathcal{G}$ such that $z$ and $y$ are $G$-related  then the pair $(z,y)$ is $M$-generic for $\RP{G}$.

\item Let $G\in \mathcal{G}$ and let $(Z',Y')\LP{G} (X,X)$ be any condition and let $M$ be again a countable elementary submodel of a large enough structure containing $(Z',Y')$ and $E$. Then there exists $(Z,Y)\LP{G} (Z',Y')$ such that $\forall z\in [Z]y\in [Y]$ if $z$ and $y$ are $G$-related, then the pair $(z,y)$ is $M$-generic for $\RP{G}$.
\end{enumerate}
\end{lem}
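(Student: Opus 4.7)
The plan is to prove both parts by a fusion argument combined with a density step at each stage. I treat part (i) in detail; part (ii) is analogous but carried out inside the product forcing $\RP{G}$. Since $A$ is finite, the set $\mathcal{G}$ of loop-containing oriented graphs on $A$ is finite, so for each $G \in \mathcal{G}$ the family of dense open subsets of $\RP{G}$ that lie in $M$ is countable. I enumerate all pairs $(G^{(k)}, D_k)$ with $G^{(k)} \in \mathcal{G}$ and $D_k \in M$ dense open in $\RP{G^{(k)}}$ in a single list indexed by $k \in \omega$.

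Next I build a fusion sequence $X_0 = X \succeq_0 X_1 \succeq_1 X_2 \succeq_2 \cdots$ inside $M$, addressing the $k$-th requirement at stage $k+1$. At stage $k+1$, for each pair of signatures $s, t \in n^{k+1}$ describing how two hypothetical branches $z, y \in [X_k]$ might absorb the first $k+1$ free blocks of $X_k$ into $a$-blocks, I check whether the induced pair $(X_k^s, X_k^t)$ is compatible with being $G^{(k)}$-related. If so, density supplies a pair $(Z', Y') \LP{G^{(k)}} (X_k^s, X_k^t)$ with $(Z', Y') \in D_k$, and by elementarity one can take this pair in $M$. The pull-back remark following the definition of $X^s$ then translates the refinements $Z', Y'$ of $X_k^s, X_k^t$ into a refinement of $X_k$ (which alters only free blocks indexed $\geq k+1$). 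Processing the finitely many pairs of signatures at level $k+1$ in some fixed order and shrinking at each sub-step yields $X_{k+1} \preceq_{k+1} X_k$.

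Let $Y$ be the fusion of $(X_k)_k$. To verify the conclusion, fix $z, y \in [Y]$ which are $G$-related for some $G \in \mathcal{G}$, and fix a dense open $D \subseteq \RP{G}$ with $D \in M$. Choose $k$ so that $(G, D) = (G^{(k)}, D_k)$. The signatures $(s, t) \in n^{k+1} \times n^{k+1}$ describing how $z$ and $y$ pick up the first $k+1$ free blocks of $X_k$ induce exactly the $G$-intersection pattern on the $a$-blocks, so stage $k+1$ produced a refinement $(Z'', Y'') \in D_k$ of $(X_k^s, X_k^t)$. Because the fusion preserves both the $a$-blocks and the initial free-block data, the branches $z, y$ factor through this refinement, exhibiting a condition in $D$ above $(z, y)$ in $\RP{G}$. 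For part (ii) I run the same fusion inside $\RP{G}$ below $(Z', Y')$, enumerating dense open subsets of $\RP{G}$ in $M$ and amalgamating refinements of pairs rather than single conditions at each stage.

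The principal obstacle is the amalgamation at each stage: refinements forced by different signatures $(s,t)$ could conflict, and they must be combined into a single $X_{k+1} \preceq_{k+1} X_k$. I handle this by processing signatures sequentially, at each sub-step shrinking only the free blocks indexed $\geq k+1$ while keeping the $a$-blocks and the first $k+1$ free blocks fixed, using the pull-back remark to ensure all sub-steps live in $\Cond{X_k}$. A secondary subtlety is that the $G$-relation between $z$ and $y$ is a global property while the density step only sees finite data; this is reconciled because $\preceq_{k+1}$-refinement preserves both $a$-blocks and initial free blocks, so for $k$ large the $G$-pattern visible on $(X_k^s, X_k^t)$ agrees with the one witnessed by $(z, y)$.
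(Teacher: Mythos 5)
Your overall architecture (a fusion along the orders $\preceq_m$, meeting one dense set from $M$ at each stage, handling the finitely many graphs by interleaving) matches the paper's, but the step you yourself single out as ``the principal obstacle'' --- amalgamating the two coordinates of the density witness back into a single condition --- is exactly where the argument breaks, and your proposed fix does not repair it. Suppose at stage $k+1$ you process a signature pair $(s,t)\in n^{k+1}\times n^{k+1}$ with $(X_k^s,X_k^t)\in\RP{G}$ and density hands you $(Z',Y')\LP{G}(X_k^s,X_k^t)$ with $(Z',Y')\in D_k$. The pull-back remark converts a \emph{single} refinement of $X_k^s$ into a refinement of $X_k$; it does not apply to the pair. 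To obtain one condition $T\preceq_{k+1}X_k$ with $T^s\preceq Z'$ and $T^t\preceq Y'$ you must decide, for each free block $X_k(j)$ with $j>k$ that $Z'$ absorbs into the $a_i$-block while $Y'$ absorbs it into the $a_{i'}$-block with $i\neq i'$ (perfectly legal in $\RP{G}$ whenever $(a_i,a_{i'})\in E_G$), where that block goes in $T$: it cannot go into any $a$-block of $T$ without violating one of the two inclusions, and it cannot stay in a free block $T(l)$ with $l>k$ either, since then it would be free in $T^s$ but sits inside the $a_i$-block of $Z'$. The only escape is to merge it into an initial free block $T(l)$, $l\leq k$, whose column satisfies $(s(l),t(l))=(i,i')$. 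If $(s,t)$ has no column realizing the edge $(a_i,a_{i'})$ --- e.g., $s=t$, which you must process because $(X_k^s,X_k^s)\in\RP{G}$ and a $G$-related pair $z,y$ may agree on all of the first $k+1$ blocks --- no such $T$ exists. Your prescription of keeping the $a$-blocks and the first $k+1$ free blocks fixed makes this strictly worse, since the resolution requires \emph{enlarging} some initial free blocks.

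This is precisely the point of the machinery you omitted: the paper only runs the density step through signature pairs of the form $(w_0u_i(0)\dots w_{e-1}u_i(e-1),\,w_0v_i(0)\dots w_{e-1}v_i(e-1))$, whose designated columns $(u_i(j),v_i(j))$ enumerate \emph{every} non-loop edge of $G$, so that each conflicting later block has a designated initial free block into which it can be merged; the price is that the fusion must range over all placements $w_0,\dots,w_{e-1}$ of the designated columns and all $e!$ orderings, and the final verification must decompose the signature of an arbitrary $G$-related pair accordingly. A secondary error: your verification asserts that the signatures of $z,y$ on the first $k+1$ free blocks ``induce exactly the $G$-intersection pattern,'' which is false (edges of $G$ may be realized only on much later free blocks); that slip alone would not be fatal, since you process all admissible signature pairs, but the amalgamation gap is.
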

Before we prove the lemma we show how Theorem \ref{CScan} follows.

Note that for each oriented graph $G\in \mathcal{G}$ the forcing notion $\RP{G}$ adds a pair of $G$-related elements from $(\omega)_A^0$ denoted as $x_L$ and $x_R$. By $\Vdash_G$, we mean the forcing relation related to the forcing notion $\RP{G}$.
\begin{enumerate}

\item {\bf Case 1} $\forall G\in \mathcal{G}((X,X)\Vdash_G x_L\cancel{E}x_R)$.
\item {\bf Case 2} $\exists G\in \mathcal{G}\exists (Z',Y')\LP{G} (X,X)((Z',Y')\Vdash_G x_L E x_R)$.\\

\end{enumerate}
\noindent\underline{Suppose that {\bf Case 1} holds.}\\

\noindent We fix some countable elementary submodel $M$ of a large enough structure containing everything necessary and use Lemma \ref{mainlem} (i) with this $M$, condition $X$ and $\mathcal{H}=\mathcal{G}$. We get some $Y\preceq_0 X$ such that for every $z,y\in [Y]$ if there is some $G\in \mathcal{G}$ such that $z$ and $y$ are $G$-related, then the pair $(z,y)$ is $M$-generic for $\RP{G}$. 

We claim that it follows that $E\upharpoonright [Y]=\mathrm{id}([Y])$. This is immediate. Let $z\neq y\in [Y]$ be arbitrary. Let $G\in \mathcal{G}$ be such that for every $a,b\in A$ $(a,b)\in E$ iff $z(a)\cap y(b)\neq \emptyset$ (note that WLOG we can assume that for every $a\in A$ $(a,a)\in E$ simply by requiring that for every $a\in A$ $Y(a)\cap \omega\neq \emptyset$). Then $z$ and $y$ are $G$-related. Since $(Y,Y)\Vdash_G x_L\cancel{E}x_R$ we have that $M[z,y]\models z \cancel{E} y$, where $M[z,y]$ is the $\RP{G}$-generic extension of $M$ obtained by adding the $M$-generic pair $(z,y)$. Since this is a coanalytic formula, using analytic absoluteness (Fact \ref{analabs}), we get that $z \cancel{E} y$.\\

\noindent\underline{Suppose that {\bf Case 2} holds.}\\

\noindent Let $\mathcal{G}_0\subseteq \mathcal{G}$ be the set of all graphs such that $\forall G\in \mathcal{G}_0 \exists (Z',Y')\LP{G} (X,X)((Z',Y')\Vdash_G x_L E x_R)$. From the assumption, $\mathcal{G}_0\neq \emptyset$. Let $P\subseteq n^{< n}$ be the set of all non-increasing sequences of natural numbers less than $n$ of lenght less than $n$ such that for every $g\in P$ we have $\sum_{i=0}^{|g|-1} g(i)=n$ and there is a graph $G\in \mathcal{G}_0$ such that for every $i<|g|$ there is a connected component of $g(i)$ vertices (not necessarily strongly connected; i.e. for every two vertices of that component there is an undirected path from one vertex to the other). Let us order it lexicographically. Note that for every $g\in P$ $g(0)\geq 2$. Let $h\in P$ be a maximal element in $P$ and let $H\in \mathcal{G}_0$ witness it; i.e. for every $i<|h|$ there is a connected component of $H$ containing $h(i)$ vertices. Since $h$ is maximal there is no graph $G\in \mathcal{G}_0$ having a connected component of more than $h(0)$ vertices. Similarly, among those graphs $G\in \mathcal{G}_0$ having a connected component of $h(0)$ vertices there is none having some other connected component of more than $h(1)$ vertices, etc. For every $i<|h|$ let $C_i\subseteq H$ be the corresponding connected component of $H$ having $h(i)$ vertices. We have $\mathcal{B}=\{C_i:i<|h|\}\in \mathcal{P}_A$.

We fix some countable elementary submodel $M$ of a large enough structure containing everythin necessary (including $(Z',Y')$) and use Lemma \ref{mainlem} (ii). We get some $(Z,Y)\LP{H} (Z',Y')$ such that for every $z\in [Z]$ and $y\in [Y]$ such that $z$ and $y$ are $H$-related the pair $(z,y)$ is $M$-generic for $\RP{H}$. Since $(Z,Y)\Vdash_H x_L E x_R$ we have that $M[z,y]\models z E y$ and by analytic absoluteness (Fact \ref{analabs}) we obtain $z E y$. Note that WLOG we may assume that $Z$ and $Y$ are $H$-related (by strengthening the condition if necessary).

We show that the transitivity of $E$ implies something stronger. We just need one definition before.
\begin{defin}
Let $X\in (\omega)_A^\omega$ be some condition and $x\in [X]$. Let $b(x):\omega\rightarrow A$ be such that $b(x)(m)=a$ if the $m$-th free block of $X$, where the free blocks are ordered by their minimal elements, was merged with the $a$-block in $x$.

Let $Y\preceq_0 X$ and $x\in [Y]\subseteq [X]$. Occasionally, we shall write $b_X(x)$ and $b_Y(x)$ to distinguish if we view $x$ as an element of $[Y]$ or $[X]$. However, when it is clear from the context, we shall just write $b(x)$.
\end{defin}
\begin{claim}\label{oneincl}
For every $z,y\in [Z]$ if $zE_\mathcal{B} y$ then $z E y$.
\end{claim}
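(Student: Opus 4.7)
The plan is to reduce $z \mathrel{E} y$ to the case of elements of $[Z]$ and $[Y]$ that are $H$-related, which was already handled before the claim via Lemma \ref{mainlem}(ii), $(Z,Y)\Vdash_H x_L E x_R$, and Fact \ref{analabs}, and then to chain finitely many such instances using transitivity of $E$.

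First I would unpack $H$-relatedness between an element $z \in [Z]$ and an element $v \in [Y]$. Since by the definition of $\RP{H}$ the free blocks of $Z$ and $Y$ coincide, and since $Z$ and $Y$ are themselves $H$-related, the condition that $z$ and $v$ are $H$-related collapses to the pointwise statement: for every free block index $k$, $(b_Z(z)(k), b_Y(v)(k)) \in E(H)$.

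Next I would establish a basic step: if $z_1, z_2 \in [Z]$ satisfy, for every free block index $k$, either $b_Z(z_1)(k) = b_Z(z_2)(k)$ or $(b_Z(z_1)(k), b_Z(z_2)(k))$ is an edge of $H$ in one of the two orientations, then $z_1 \mathrel{E} z_2$. The witness is $v \in [Y]$ defined pointwise: where $b_Z(z_1)(k) = b_Z(z_2)(k) = a$, set $b_Y(v)(k) = a$ and use the loop at $a$; where they differ by an edge, set $b_Y(v)(k)$ to be the target of the oriented edge, so that the remaining required edge becomes a loop. Then $(z_1, v)$ and $(z_2, v)$ are both $H$-related; Lemma \ref{mainlem}(ii) makes them $M$-generic for $\RP{H}$, and $(Z,Y)\Vdash_H x_L E x_R$ combined with Fact \ref{analabs} yields $z_1 \mathrel{E} v$ and $z_2 \mathrel{E} v$. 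Transitivity of $E$ closes the basic step.

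Finally, given $z, y \in [Z]$ with $z \mathrel{E_\mathcal{B}} y$: at each free block index $k$, the values $b_Z(z)(k)$ and $b_Z(y)(k)$ lie in the same connected component of $H$, hence are joined by an undirected path of length at most $L$, where $L$ is the maximum diameter of a component of $H$ (finite because $A$ is finite). I would construct a chain $z = z_0, z_1, \ldots, z_L = y$ in $[Z]$ by advancing each coordinate one edge along its chosen path at each step, and stopping at the endpoint once reached. Each consecutive pair $z_j, z_{j+1}$ then satisfies the hypothesis of the basic step, so $z_j \mathrel{E} z_{j+1}$, and transitivity gives $z \mathrel{E} y$.

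The main subtlety is the orientation bookkeeping in the basic step, namely choosing the correct endpoint so that both required edges, one given and one a loop, lie in $E(H)$; everything else is a routine application of the forcing, absoluteness, and $M$-genericity machinery already set up.
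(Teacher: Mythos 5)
Your proposal is correct and follows essentially the same route as the paper: both arguments use the fact that, once $Z$ and $Y$ are $H$-related with matching free blocks, $H$-relatedness of a pair from $[Z]\times[Y]$ reduces to the pointwise edge condition on the $b$-values, and both then chain finitely many $H$-related (hence $M$-generic, hence $E$-equivalent) pairs through $[Y]$, padding with loops and invoking transitivity. The only cosmetic difference is packaging: interleaving your $[Z]$-chain with its $[Y]$-witnesses reproduces the paper's single alternating path of uniform odd length $n_H$.
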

\noindent\emph{Proof of the claim.} We prove that for every $z\in [Z]$ and $y\in [Y]$ such that $\forall m\in \omega \forall C\in \mathcal{B}(b(z)(m)\in C\Leftrightarrow b(y)(m)\in C)$ we have $z E y$. This suffices. To see this, let $z,y\in [Z]$ be such that $zE_\mathcal{B} y$. Let $\bar{y}\in [Y]$ be the unique element of $[Y]$ such that for every $m\in \omega$ $b(\bar{y})(m)=b(y)(m)$. Then the pair $z,\bar{y}$ satisfies the condition above, thus $z E \bar{y}$. Moreover, since $y$ and $\bar{y}$ are $H$-related, we have $y E \bar{y}$, thus from transitivity $z E y$.

Let $z\in [Z]$ and $y\in [Y]$ be a pair as described above; i.e. $\forall m\in \omega \forall C\in \mathcal{B}(b(z)(m)\in C\Leftrightarrow b(y)(m)\in C)$. We abuse the graph-theoretic terminology and by an alternating path in an oriented graph we mean the path where the orientation of edges alternatively agrees with the orientation of the path and disagrees, loops do not have orientation, thus can occur anywhere in an alternating path. We shall also assume that the orientation of the first edge agrees with that of the path. Since $C_i$, for $i<|h|$, is a connected component, for every pair $a,b\in C_i$ there exists an unoriented path from $a$ to $b$ and since $H$ contains all loops there exists an alternating path from $a$ to $b$. There exists an odd number $n_H$ and an alternating path of length $n_H$ (consisting of $n_H$ edges) between any pair of vertices from $C_i$ for every $i<|h|$. To see this, just realize that we can lengthen any alternating path by adding loops at the end. For every $i<|h|$ and every pair $a,b\in C_i$ (of not necessarily distinct elements) let $p(a,b):n_H+1\rightarrow C_i$ be such a path; i.e. $p(a,b)(0)=a$, $p(a,b)(n_H)=b$, for every $m\leq n_H$ $p(a,b)(m)\in C_i$ and for every even $m<n_H$ we have $(p(a,b)(m),p(a,b)(m+1))\in E_H$ (similarly, for every odd $m<n_H$ we have $(p(a,b)(m+1),p(a,b)(m))\in E_H$), where $E_H$ is the set of edges of $H$.

We define a sequence $x_0,\ldots,x_{n_H}$ such that $x_0=z$, $x_{n_h}=y$, for odd $i$ $x_i\in [Y]$, for even $i$ $x_i\in [Z]$ and for every $i<n_H$ $x_i E x_{i+1}$. Then we get from transitivity of $E$ that $z E y$. Let us define the sets $O_i=\{m\in \omega: b(z)(m)\in C_i\}$ for every $i<|h|$. By the assumption $O_i=\{m\in \omega: b(y)(m)\in C_i\}$. For every $i<|h|$ and $m\in O_i$ let $p(m)=p(b(z)(m),b(y)(m))$. For every $i\leq n_H$ let $x_i$ be the unique element (of $[Z]$ provided $i$ is even and of $[Y]$ provided $i$ is odd) such that for every $j<|h|$ and $m\in O_j$ $b(x_i)(m)=p(m)(i)$. To check that for every even $i<n_H$ we have $x_i E x_{i+1}$ it suffices to check that $x_i$ and $x_{i+1}$ are $H$-related. That follows from the fact that $Z$ and $Y$ are $H$-related and that for every $j<|h|$ and $m\in O_j$ $b(x_i)(m)=p(m)(i)$, $b(x_{i+1})(m)=p(m)(i+1)$, thus $(b(x_i)(m),b(x_{i+1})(m))\in E_H$. To check that for every odd $0<i\leq n_H$ we have $x_{i-1} E x_i$ it again suffices to prove that $x_{i-1}$ and $x_i$ are $H$-related. This is completely analogous as for $i$ even. This finishes the proof of the claim.
\begin{claim}
For every $G\in \mathcal{G}\setminus \mathcal{G}_0$ $(Z,Z)\Vdash_G x_L \cancel{E} x_R$.
\end{claim}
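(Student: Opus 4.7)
My plan is to prove the claim by a short contrapositive-style argument that uses only the definition of $\mathcal{G}_0$ together with a standard forcing-theoretic observation; no new combinatorics is required.

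First I would verify that $(Z,Z)\LP{G}(X,X)$ for every $G\in\mathcal{G}$. The discussion immediately following Definition \ref{red_prod} notes that a diagonal pair lies in every reduced product (the condition $X(a)\cap X(b)\neq\emptyset\Rightarrow (a,b)\in E_G$ is vacuous for distinct $a,b$ since the blocks of $X$ are disjoint, and the loops $(a,a)$ are always in $G$). In particular $(Z,Z)\in\RP{G}$, and since $Z\preceq X$ and $\LP{G}$ is inherited from the coordinatewise order on $\Cond{\omega}\times\Cond{\omega}$, we also have $(Z,Z)\LP{G}(X,X)$.

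Next I would unpack the hypothesis $G\in\mathcal{G}\setminus\mathcal{G}_0$: by the very definition of $\mathcal{G}_0$, this says that no condition $(Z'',Y'')\LP{G}(X,X)$ in $\RP{G}$ forces $x_L E x_R$. Since every extension of $(Z,Z)$ in $\RP{G}$ is automatically an extension of $(X,X)$ (by the previous paragraph together with transitivity of $\LP{G}$), it follows that no extension of $(Z,Z)$ forces $x_L E x_R$ either.

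Finally, because $E$ is analytic, the statement $x_L E x_R$ is $\Sigma^1_1$ in the generic reals and the parameter coding $E$. The standard forcing equivalence then applies: a condition $p$ forces $\neg\phi$ (for $\phi$ $\Sigma^1_1$) if and only if no $q\LP{G} p$ forces $\phi$ -- one direction being monotonicity, the other following from the forcing theorem applied to the witness inside a hypothetical generic extension. Taking $p=(Z,Z)$ and $\phi=(x_L E x_R)$ and combining with the previous step yields $(Z,Z)\Vdash_G x_L\cancel{E} x_R$, as required.

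The hard part is simply recognizing that there is no hard part: the claim is essentially the contrapositive of the definition of $\mathcal{G}_0$, transported from $(X,X)$ down to $(Z,Z)$. The only genuine content is the forcing-theoretic equivalence above, which is standard for $\Sigma^1_1$ statements and is precisely the same style of reasoning the paper already invokes implicitly via analytic absoluteness (Fact \ref{analabs}).
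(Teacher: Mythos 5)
Your proposal is correct and follows essentially the same route as the paper: the paper's proof is exactly the contrapositive you describe, namely that an extension of $(Z,Z)$ forcing $x_L E x_R$ would also extend $(X,X)$ and hence contradict $G\notin\mathcal{G}_0$. Your write-up merely makes explicit the two facts the paper uses silently (that $(Z,Z)\LP{G}(X,X)$ and that a condition forces $\neg\phi$ iff no extension forces $\phi$ --- which, incidentally, holds for arbitrary formulas, not just $\Sigma^1_1$ ones).
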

Otherwise we would have that $\exists(Z',Y')\LP{G} (Z,Z)((Z',Y')\Vdash_G x_L E x_R)$; however $(Z',Y')\LP{G} (Z,Z)\LP{G} (X,X)$ and that would be a contradiction with the definition of $\mathcal{G}_0$ and the fact that $G\notin \mathcal{G}_0$.\\

Let us use Lemma \ref{mainlem} (i) with the model $M$, condition $Z$ and $\mathcal{H}=\mathcal{G}\setminus\mathcal{G}_0$. We get some $Y'\preceq_0 Z$ such that for every $G\in \mathcal{G}\setminus\mathcal{G}_0$ and every pair $z,y\in [Y']$ that is $G$-related it is also $M$-generic for $\RP{G}$. Since $(Y',Y')\Vdash_G x_L \cancel{E} x_R$, arguing as before we get that $z \cancel{E} y$. Finally, let us slightly strengthen the condition $Y'$ as follows. We define $Y\preceq_0 Y'$ as follows: for every $a\in A$ $Y(a)=Y'(a)$, however for every $m\in \omega$, the $m$-th free block of $Y$ (recall that free blocks are ordered by their minimal elements) is obtaining by merging $n^2+1$ free blocks of $Y'$ together; more specifically, let the $m$-the free block of $Y$ be the union of the $i$-th blocks of $Y'$, where $i$ ranges between $(n^2+1)\cdot m$ and $(n^2+1)\cdot m+n^2$. We recall that $n=|A|$.
\begin{claim}
$E\upharpoonright [Y]=E_\mathcal{B}\upharpoonright [Y]$.
\end{claim}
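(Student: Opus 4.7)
The plan is to establish $E\upharpoonright [Y]=E_\mathcal{B}\upharpoonright [Y]$ by proving both inclusions. The inclusion $E_\mathcal{B}\upharpoonright [Y]\subseteq E\upharpoonright [Y]$ is immediate from Claim \ref{oneincl}, since $[Y]\subseteq [Y']\subseteq [Z]$. For the reverse inclusion I would argue by contradiction: assume $z,y\in[Y]$ satisfy $zEy$ but $\neg(zE_\mathcal{B} y)$. Because $z$ and $y$ must coincide on every $a$-block of $Y$, the $\mathcal{B}$-disagreement has to occur inside some free block of $Y$, so there is an index $m_0$ with $b_Y(z)(m_0)=a\in C_i$ and $b_Y(y)(m_0)=b\in C_j$ for some $i\neq j$.

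The key idea is to exploit the $(n^2+1)$-fold merging built into the passage from $Y'$ to $Y$: the $m_0$-th free block of $Y$ is by construction the union of $n^2+1$ consecutive free blocks of $Y'$, and that is just enough room to realize every edge of $C_i\times C_j$ inside one block. I would define $z'',y''\in[Y']$ to agree with $z$ and $y$ outside this bad block of $Y$, and on the $n^2+1$ internal sub-blocks of $Y'$ to assign $z''$ only values in $C_i$ and $y''$ only values in $C_j$, chosen so that every pair $(u,v)\in C_i\times C_j$ occurs at some sub-block. The tight inequality $|C_i|\cdot|C_j|\leq n^2<n^2+1$ is precisely what makes this possible, and it explains the choice of constant.

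Since all replacements keep each coordinate inside its $\mathcal{B}$-class, one has $z''E_\mathcal{B} z$ and $y''E_\mathcal{B} y$; Claim \ref{oneincl} (applied in $[Z]$) upgrades these to $z''Ez$ and $y''Ey$, and transitivity together with the hypothesis $zEy$ yields $z''Ey''$. On the other hand, by construction the graph $G''$ of $(z'',y'')$ equals $G\cup(C_i\times C_j)$ as an edge set, so every vertex of $C_i$ is connected to every vertex of $C_j$ in $G''$, and the partition $\mathcal{B}''$ of $A$ into connected components of $G''$ is a proper coarsening of $\mathcal{B}$ in which at least the classes $C_i$ and $C_j$ are fused. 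A short combinatorial check shows that merging any two classes of a partition strictly raises the sorted sequence of class sizes in the lexicographic order, so $\mathcal{B}''$ lies strictly above $h$.

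Maximality of $h$ therefore forces $G''\in\mathcal{G}\setminus\mathcal{G}_0$. The second Claim now gives $(Z,Z)\Vdash_{G''} x_L\cancel{E}x_R$ and hence $(Y',Y')\Vdash_{G''} x_L\cancel{E}x_R$; Lemma \ref{mainlem}(i), already used to produce $Y'$ with $\mathcal{H}=\mathcal{G}\setminus\mathcal{G}_0$, guarantees that the $G''$-related pair $(z'',y'')\in[Y']^2$ is $M$-generic for $\RP{G''}$; and analytic absoluteness (Fact \ref{analabs}) converts this into $\neg(z''Ey'')$, contradicting $z''Ey''$. The main obstacle, and the one point where the careful $(n^2+1)$-block-merging is genuinely used, is the combinatorial construction of $(z'',y'')$ inside the single bad free block: one must fit all $|C_i|\cdot|C_j|$ pair-realizations into $n^2+1$ sub-blocks while respecting the $\mathcal{B}$-class constraints, and then verify that the resulting coarser partition $\mathcal{B}''$ genuinely sits strictly above $h$ in the lexicographic order.
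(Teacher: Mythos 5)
Your proof follows the paper's argument essentially step for step: the inclusion $E_\mathcal{B}\upharpoonright[Y]\subseteq E\upharpoonright[Y]$ from Claim \ref{oneincl}, and for the converse a contradiction built around a single bad free block of $Y$, using the $(n^2+1)$-fold merging of blocks of $Y'$ to manufacture a pair in $[Y']^2$ whose associated graph fuses $C_{i}$ with $C_{j}$, followed by maximality of $h$, the claim that $(Z,Z)\Vdash_G x_L\cancel{E}x_R$ for $G\in\mathcal{G}\setminus\mathcal{G}_0$, the genericity supplied by Lemma \ref{mainlem} (i) in the construction of $Y'$, and analytic absoluteness. The only divergence is in the construction of the auxiliary pair: the paper perturbs one side only, producing an interpolant $y'$ that agrees with $z$ off the bad block and is then paired with the original $y$, whereas you perturb both sides symmetrically so that every pair of $C_i\times C_j$ is realized inside the bad block. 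This is a harmless, arguably cleaner, variant --- it guarantees outright that $C_i\cup C_j$ lies in a single connected component of the new graph, and your count $|C_i|\cdot|C_j|\le n^2<n^2+1$ is a correct account of where the constant comes from. One caveat, which applies equally to the paper's own write-up and so does not count against you: both arguments assert that the connected components of the resulting graph form a coarsening of $\mathcal{B}$ (so that its component-size sequence beats $h$ lexicographically), but this also requires each class $C_k$ to remain connected in that graph, and the only edges available outside the bad block are the pairs $(b(z)(m'),b(y)(m'))$ that $z$ and $y$ happen to realize there; neither you nor the paper spells out why this suffices.
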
 
\noindent\emph{Proof of the claim.} In Claim \ref{oneincl} we proved that $E\upharpoonright [Z]\supseteq E_\mathcal{B}\upharpoonright [Z]$ and since $Y\preceq_0 Z$, also $E\upharpoonright [Y]\supseteq E_\mathcal{B}\upharpoonright [Y]$. Suppose for contradiction that there is a pair $z,y\in [Y]$ such that $z\cancel{E}_\mathcal{B} y$, yet $z E y$. Thus there exist $i_0<i_1<|h|$ and $a\in C_{i_0}$ and $b\in C_{i_1}$ such that $z(a)\cap y(b)\neq \emptyset$. Thus there is some $m\in \omega$ such that $b(z)(m)=a$ while $b(y)(m)=b$. Since also $z\in [Y']$ we have $b_{Y'}(z)(i)=a$ for $m\cdot (n^2+1)\leq i<(m+1)\cdot (n^2+1)$. We define $y'\in [Y']$ as follows: for every $i\notin [m\cdot (n^2+1),(m+1)\cdot (n^2+1)-1]$ we set $b_{Y'}(y')(i)=b_{Y'}(z)(i)$, we also set $b_{Y'}(y')(i)=b_{Y'}(z)(i)=a$ for $i=m\cdot (n^2+1)$, and finally we set $b_{Y'}(y')(i)$, for $i\in (m\cdot (n^2+1),(m+1)\cdot (n^2+1))$ so that $z$ and $y'$ are $H$-related. This is clearly possible. We have that $z E y'$ and from transitivity of $E$ also $y' E y$. However, consider the graph $G=(A,E_G)$ determined by the pair $(y',y)$; i.e. $(a_i,a_j)\in E_G$ if $y'(a_i)\cap y(a_j)\neq \emptyset$. $b_{Y'}(y')(i)$ and $b_{Y'}(y)(i)$, for $i\in [m\cdot (n^2+1),(m+1)\cdot (n^2+1)-1]$, witness that the connected components of $G$ are $\{C_i:i<|h|\wedge i\neq i_0\wedge i\neq i_1\}\cup\{C_{i_o}\cup C_{i_1}\}$. It follows that $G\in \mathcal{G}\setminus \mathcal{G}_0$. Otherwise, it would contradict the choice of $h$ as a maximal element of $P$. Therefore, since $y'$ and $y$ are $G$-related, we have $y'\cancel{E} y$, a contradiction. This finishes the proof of the claim.\\

To finish the proof of Theorem \ref{CScan} it remains to prove Lemma \ref{mainlem}.\\

\noindent\emph{Proof of Lemma \ref{mainlem}.} We prove only the item (i), the second one is just a routine modification. What we prove is that for every elementary submodel $M$ of a large enough structure, every condition $X\in (\omega)_A^\omega$ and $G\in \mathcal{G}$ there is $Y\preceq_0 X$ such that for every $z,y\in [Y]$ if the pair $z$ and $y$ is $G$-related then this pair is $M$-generic for $\RP{G}$. Repeating this claim for every $G\in \mathcal{H}$ gives the item (i) of the lemma.

So let an elementary submodel $M$ of $H_\lambda$, $\lambda$ suficiently large, a graph $G=(A,E_G)\in \mathcal{G}$ and a condition $X\in (\omega)_A^\omega$ be given. Let us enumerate all open dense subsets of $\RP{G}$ lying in $M$ as $(D_n)_{n\in \omega}$.

Let $E'=E_G\setminus\{(a,a):a\in A\}$ be the set of all edges of $G$ that are not loops. Let $e=|E'|$ and let $u,v\in n^e$ be a pair of sequences of length $e$ such that $\{(a_{u(i)},a_{v(i)}):i<e\}=E'$.Certainly we can find $e!$-many of such sequences, let us enumerate them as $(u_i,v_i)_{i<e!}$; i.e. for every $i\neq j<e!$, $u_i,v_i\in n^e$, $u_i\neq u_j\vee v_i\neq v_j$ and for every $j<e!$ $\{(a_{u_j(i)},a_{v_j(i)}):i<e\}=E'$.
\begin{claim}\label{mainlemingr}
Let $Z\preceq_0 X$ be arbitrary, $i<e!$ and for every $j<e$ let $w_j\in n^{<\omega}$. Then there exists $T\preceq_{e+|w_0|+\ldots+|w_{e_1}|} Z$ such that for every $z\in [T^{w_0u_i(0)w_1u_i(1)\ldots w_{e-1}u_i(e-1)}]$, every $y\in [T^{w_0v_i(0)w_1v_i(1)\ldots w_{e-1}v_i(e-1)}]$ such that $z$ and $y$ are $G$-related, the pair $(z,y)$ is $M$-generic for $\RP{G}$.
\end{claim}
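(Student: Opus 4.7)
The plan is to prove Claim \ref{mainlemingr} by a fusion argument that meets countably many open dense subsets of $\RP{G}$ one at a time. Let $N = e + \sum_{j<e}|w_j|$ and set
\[
s_L = w_0\, u_i(0)\, w_1\, u_i(1) \ldots w_{e-1}\, u_i(e-1), \qquad s_R = w_0\, v_i(0)\, w_1\, v_i(1) \ldots w_{e-1}\, v_i(e-1),
\]
each of length $N$. Enumerate the open dense subsets of $\RP{G}$ lying in $M$ as $(D_n)_{n \in \omega}$, and build a fusion sequence $Z = T_0 \succeq_N T_1 \succeq_{N+1} T_2 \succeq_{N+2} \ldots$ inside $M$. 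Define $T$ to be its fusion; then $T \preceq_{N+k} T_k$ for every $k$, so in particular $T \preceq_N Z$ as demanded.

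At stage $k+1$, I list the finitely many pairs $(\sigma, \tau) \in n^{N+k} \times n^{N+k}$ extending $(s_L, s_R)$ for which $(T_k^\sigma, T_k^\tau) \in \RP{G}$. Process them sequentially: starting with $S := T_k$, for each such $(\sigma, \tau)$ I apply a lifting step (combined with density of $D_k$) to find, inside $M$, some $T' \preceq_{N+k} S$ with $(T'^\sigma, T'^\tau) \in D_k$; then update $S := T'$. After exhausting the pairs, set $T_{k+1} := S$. Transitivity of $\preceq_{N+k}$ and the finiteness of the pair list preserve $T_{k+1} \preceq_{N+k} T_k$, while elementarity of $M$ supplies $T_{k+1} \in M$.

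The verification that $T$ works is then routine. Given $z \in [T^{s_L}]$ and $y \in [T^{s_R}]$ that are $G$-related, the pair $(z,y)$ automatically lies in $\RP{G}$, since $z$ and $y$ share the free-block structure inherited from $T$. For any $n$, the ``stem pair'' $(\sigma, \tau) \in n^{N+n} \times n^{N+n}$ recording how $z$ and $y$ distribute the first $N+n$ free blocks of $T$ among the $a$-blocks was processed at stage $n$, so there is a condition in $D_n$ below $(T^\sigma, T^\tau)$ whose $[\cdot]$-sets contain $z$ and $y$ respectively. Hence $(z,y)$ meets $D_n$, establishing $M$-genericity.

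The main obstacle is the \emph{lifting step}: given $(Z', Y') \LP{G} (S^\sigma, S^\tau)$ in $\RP{G}$, one must produce a single $T' \preceq_{N+k} S$ in $\Cond{\omega}$ whose prefix images satisfy $(T'^\sigma, T'^\tau) \LP{G} (Z', Y')$. I would prove this by taking the common free-block structure of $Z'$ and $Y'$ (forced by the definition of $\RP{G}$) as the free-block structure of $T'$, and reverse-engineering the $a$-blocks of $T'$ from the prefix data $\sigma, \tau$ together with the $a$-blocks of $Z'$ and $Y'$; the $G$-compatibility of overlaps between $Z'$ and $Y'$ ensures this reconstruction yields a legitimate element of $\Cond{\omega}$ below $S$ that preserves the first $N+k$ free blocks. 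With this lifting lemma in hand, the fusion argument above yields the claim.
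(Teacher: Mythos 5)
Your overall architecture is the same as the paper's: a fusion $Z = T_0 \succeq_N T_1 \succeq_{N+1} \ldots$ in which, at stage $m$, one handles all ``compatible'' extensions of the base stems of length $N+m$ (your pairs $(\sigma,\tau)$ with $(T_k^\sigma,T_k^\tau)\in\RP{G}$ are exactly the paper's base stems followed by good pairs), meets $D_m$ for each of them, and then verifies genericity by reading off the stem pair of a $G$-related $(z,y)$ at each length. That part is fine and matches the paper, which in fact states the lifting step even more tersely than you do.

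The gap is in the one place where actual work is needed, namely the lifting step you correctly isolate as the main obstacle. Your proposed reconstruction --- take the common free-block structure of $Z'$ and $Y'$ as that of $T'$ and reverse-engineer the $a$-blocks, with $T'$ ``preserving'' the first $N+k$ free blocks of $S$ --- breaks down precisely on the free blocks $S(j)$ (with $j\geq N+k$) that $Z'$ absorbed into its $a_p$-block while $Y'$ absorbed them into its $a_q$-block with $p\neq q$; the definition of $\RP{G}$ permits this whenever $(a_p,a_q)\in E_G$ is a non-loop edge. Such a block cannot be placed into any $a$-block of $T'$ (it would have to lie in both $T'(a_p)$ and $T'(a_q)$), and it is not among the common free blocks of $Z'$ and $Y'$ either, since it has been absorbed. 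The only way out is to merge it into one of the \emph{first} $N+k$ free blocks of $T'$, at a position $m$ with $(\sigma(m),\tau(m))=(p,q)$, so that the prefix operations $T'\mapsto T'^\sigma$ and $T'\mapsto T'^\tau$ route it to $a_p$ on the left and to $a_q$ on the right. Such a position exists for \emph{every} non-loop edge of $G$ only because the stems have the special interleaved form $w_0u_i(0)\ldots w_{e-1}u_i(e-1)$ and $w_0v_i(0)\ldots w_{e-1}v_i(e-1)$, where $(u_i,v_i)$ enumerates all of $E'=E_G\setminus\{\text{loops}\}$. Your proposal never invokes this property of $u_i,v_i$, which is the entire reason the claim is stated for stems of this shape; and note that this forces the first $N+k$ free blocks of $T'$ to properly \emph{grow} (which $\preceq_{N+k}$ allows), not to be preserved. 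Without this routing device the set of ``liftable'' conditions below $(S^\sigma,S^\tau)$ is not dense, and the density of $D_k$ alone does not produce a diagonal condition in $D_k$.
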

Suppose the claim is proved. We show how, by a fusion process, we can find the desired $Y$. Using the claim $e!$ times we can find $Y_0\preceq_{e} X$ such that for every $i<e!$ and for every $z\in [Y_0^{u_i(0)\ldots u_i(e-1)}]$ and every $y\in [Y_0^{v_i(0)\ldots v_i(e-1)}]$ if $z$ and $y$ are $G$-related then they are $M$-generic for $\RP{G}$. Suppose we have already found $Y_{m-1}\preceq_{e+m-1} Y_{m-2}$. Using the claim several times we can find $Y_m\preceq_{e+m} Y_{m-1}$ such that for every $i<e!$ and every $e$-tuple of sequences $w_0,\ldots,w_{e-1}\in n^{<\omega}$ (each $w_i$ can be empty) such that $\sum _{i=0}^{e-1} |w_i|=m$ we have that for every $z\in [Y_m^{w_0u_i(0)w_1u_i(1)\ldots w_{e-1}u_i(e-1)}]$ and every $y\in [Y_m^{w_0v_i(0)w_1v_i(1)\ldots w_{e-1}v_i(e-1)}]$ if $z$ and $y$ are $G$-related, then this pair is $M$-generic for $\RP{G}$. Let $Y$ be the limit of the fusion sequence $Y_0\succeq_{e+1} Y_1\succeq_{e+2}\ldots$. Let $z,y\in [Y]$ be a pair that is $G$-related. Realize that it is witnessed by some $i<e!$ and some $e$-sequence $w_0,\ldots,w_{e-1}\in n^{<\omega}$ (each $w_i$ can be empty) such that $z\in [Y^{w_0u_i(0)w_1u_i(1)\ldots w_{e-1}u_i(e-1)}]$ and $y\in [Y^{w_0v_i(0)w_1v_i(1)\ldots w_{e-1}v_i(e-1)}]$. Let $m=\sum _{i=0}^{e-1} |w_i|$. However, we then have that $z\in [Y_m^{w_0u_i(0)w_1u_i(1)\ldots w_{e-1}u_i(e-1)}]$ and $y\in [Y_m^{w_0v_i(0)w_1v_i(1)\ldots w_{e-1}v_i(e-1)}]$ and since $z$ and $y$ are $G$-related we have guaranteed at the $m$-th step of the fusion that the pair $(z,y)$ is $M$-generic for $\RP{G}$.

Thus it remains to prove the claim.\\

Let $s,t\in n^{<\omega}$ be a pair of sequences of the same length. We say that such a pair $(s,t)$ is good if for every $i<|s|=|t|$ $(a_{s(i)},a_{t(i)})\in E_G$ (it may be a loop, i.e. $s(i)$ and $t(i)$ may be equal). For every $m\in \omega$ let $(s^m_i,t^m_i)_{i<|E_G|^m}$ be an enumeration of all good pairs of sequences of length $m$. We shall again do a fusion. Since $D_0$ is dense open in $\RP{G}$ there exists $T_0\preceq_{e+|w_0|+\ldots+|w_{e_1}|} Z$ such that $(T_0^{w_0u_i(0)\ldots w_{e-1}u_i(e-1)},T_0^{w_0v_i(0)\ldots w_{e-1}v_i(e-1)})\in D_0$. Assume we have found $T_{m-1}\preceq_{e+|w_0|+\ldots+|w_{e_1}|+m-1} T_{m-2}$. Since $D_m$ is dense open in $\RP{G}$ we can find $T_m\preceq_{e+|w_0|+\ldots+|w_{e_1}|+m} T_{m-1}$ such that for every $j<|E_G|^m$ $(T_m^{w_0u_i(0)\ldots w_{e-1}u_i(e-1)s^m_j},T_m^{w_0v_i(0)\ldots w_{e-1}v_i(e-1)t^m_j})\in D_m$. Let $T$ be the limit of this fusion sequence. We claim it is as desired. Let $z\in [T^{w_0u_i(0)w_1u_i(1)\ldots w_{e-1}u_i(e-1)}]$ and $y\in [T^{w_0v_i(0)w_1v_i(1)\ldots w_{e-1}v_i(e-1)}]$ be such that they are $G$-related. Then for every $m$ there is $j<|E_G|^m$ such that $z\in [T_m^{w_0u_i(0)\ldots w_{e-1}u_i(e-1)s^m_j}]$ and $y\in [T_m^{w_0v_i(0)\ldots w_{e-1}v_i(e-1)t^m_j}]$, thus for every $m\in \omega$ $(z,y)\in \bigcup D_m$, so the pair $(z,y)$ is $M$-generic for $\RP{G}$.
\end{proof}
\bigskip

We do not know whether an analogous result holds for partitions of $\omega$ having free blocks. Let us consider the following ideal $I_{C_{n,m}}$, where $n\in \omega$ and $m\geq 1$, on the space $\Condi{\omega}{m}$: a Borel subset $A\subseteq \Condi{\omega}{m}$ belongs to $I_{C_{n,m}}$ if there is no $X\in \Cond{\omega}$ such that $\Condi{X}{m}\subseteq A$. For any partition $\mathcal{B}$ of $A\cup m$ let us define an equivalence relation $E_\mathcal{B}$ on $\Condi{\omega}{m}$: for $x,y\in \Condi{\omega}{m}$ we set $x E_\mathcal{B} y$ iff $\forall n\in \omega\forall B\in \mathcal{B} (x(n)\in B\Leftrightarrow y(n)\in B)$. The following is our hypothesis concerning canonization for the ideal $I_{C_{n,m}}$.
\begin{question}
Let $X\in \Cond{\omega}$ be a condition and $E$ an analytic equivalence relation on $\Condi{X}{m}$. Does there exist a partition $\mathcal{B}$ of $A\cup m$ and $Y\in \Cond{X}$ such that $E\upharpoonright \Condi{Y}{m}=E_\mathcal{B}\upharpoonright \Condi{Y}{m}$?
\end{question}
A result of this type would be a generalization of the full Carlson-Simpson theorem.\\

We finish by providing the proofs of Proposition \ref{CSprop} and Theorem \ref{introcor}.
\begin{proof}[Proof of Proposition \ref{CSprop}]
Fix an alphabet $A$ with $|A|=n\geq 2$. Let us prove (1). Let $A_n\in I_{C_n}$ for all $n\in \omega$. Suppose that $A=\bigcup_{n\in \omega} A_n \notin I_{C_n}$. It must contain $[X]$ for some $X\in \Cond{\omega}$. We can define a Borel equivalence relation $E$ on $[X]$ with countably many classes such that $\forall x,y\in [X] (xEy\Leftrightarrow \exists n\in \omega (x,y\in A_n))$. Applying Theorem \ref{CScan} we get $Y\preceq_0 X$ such that $E\upharpoonright [Y]$ is the full relation, the identity relation or $E_\mathcal{B}\upharpoonright [Y]$ for some $\mathcal{B}\in \mathcal{P}_A$. Since $E$ has only countably many classes only the first case is possible. Thus $E\upharpoonright [Y]=[Y]\times[Y]$, i.e. there is $n\in \omega$ such that $[Y]\subseteq A_n$ which is a contradiction.

The item (2) follows from (1). For any $X\in \Cond{\omega}$, $[X]$ is a Borel (closed) $I_{C_n}$-positive subset; conversely, it follows from (1) that any Borel $I_{C_n}$-positive subset of $A^\omega$ contains $[X]$ for some $X\in \Cond{\omega}$.

We now prove (3). Consider the suborders $\preceq_m\subseteq \preceq_{m-1}\subseteq \ldots \subseteq \preceq_0$ on $\Cond{\omega}$. It is easy to check that $\Cond{\omega}$ with these relations satisfies Axiom A and thus it is proper (we refer to \cite{Je} p. 604, for example).
\end{proof}
\begin{proof}[Proof of Theorem \ref{introcor}]
This is just a special case of Theorem \ref{CScan} for $A=\{a_0,a_1\}$ and if we consider $X$ to be the biggest condition in $\Cond{\omega}$. Theorem \ref{CScan} gives a subcondition $Y\in \Cond{\omega}$ on which $E$ is simple. The condition $Y$ determines the sequence $(A_n)_{n\in \omega}$: $A_0=Y(a_1)$ and $A_i=A(i-1)$ for $i\geq 1$.

Let us just note that we cannot eliminate the set $A_0$ from the statement, i.e. demand it to be empty. Just consider an equivalence relation $E$ on $\mathcal{P}(\omega)$ where for $a,b\in \mathcal{P}(\omega)$ we have $a E b$ if $\min a=\min b$.
\end{proof}


\begin{thebibliography}{10}
\bibitem{CaSi}
T. Carlson, S. Simpson, \emph{A dual form of Ramsey's theorem}, Adv. in Math.  53 (1984), no. 3, 265–290
\bibitem{Je}
T. Jech, \emph{Set theory. The third millennium edition}, Springer-Verlag, Berlin, 2003
\bibitem{KSZ}
V. Kanovei, M. Sabok, J. Zapletal, \emph{Canonical Ramsey Theory on Polish Spaces}, Cambridge University Press, in press
\bibitem{Zap}
J. Zapletal, \emph{Forcing idealized}, Cambridge University Press, Cambridge, 2008
\end{thebibliography}
\end{document}